
\documentclass{amsart}
\usepackage{amssymb,latexsym}
\usepackage{amscd,amsthm}

\usepackage[all]{xy}

\newtheorem{theorem}{Theorem}[section]
\newtheorem{lemma}[theorem]{Lemma}
\newtheorem{proposition}[theorem]{Proposition}
\newtheorem{corollary}[theorem]{Corollary}

\theoremstyle{definition}
\newtheorem{definition}[theorem]{Definition}

\newtheorem*{remark}{Remark}

\DeclareMathOperator{\Ext}{Ext}

\DeclareMathOperator{\im}{Im}


\newcommand{\cat}[1]{\mathcal{#1}}           

\newcommand{\class}[1]{\mathcal{#1}}   

\newcommand{\Z}{\mathbb{Z}}


\begin{document}

\title{The homotopy category of $N$-complexes is a homotopy category}

\date{\today}

\author{James Gillespie}

\begin{abstract}
We show that the category of $N$-complexes has a Str\o m model structure, meaning the weak equivalences are the chain homotopy equivalences. This generalizes the analogous result for the category of chain complexes ($N = 2$). The trivial objects in the model structure are the contractible $N$-complexes which we necessarily study and derive several results.
\end{abstract}

\maketitle


\section{Introduction}\label{section-introduction}

Let $R$ be a ring and $N \geq 2$. By an $N$-complex $X$ we mean a sequence of $R$-modules and $R$-linear maps
$$\cdots \xrightarrow{d_{n+2}} X_{n+1} \xrightarrow{d_{n+1}} X_n \xrightarrow{d_n} X_{n-1} \xrightarrow{d_{n-1}} \cdots$$ satisfying $d^N = 0$. That is, composing any $N$-consecutive maps gives 0. So a 2-complex is a chain complex in the usual sense. $N$-complexes seem to have first appeared in the paper~\cite{kapranov}. Since then many papers have appeared on the subject, many of them studying their interesting homology (recently called ``amplitude homology''), and pointing to their relevance in theoretical physics. See for example~\cite{kassel-wambst}, \cite{dubois-violette}, \cite{tikaradze}, \cite{estrada}, \cite{amplitude cohomology}, \cite{henneaux}, and \cite{gill-hovey-generalized derived cats}. There are many other papers written on the subject, most notably those of M. Dubois-Violette and coauthors.

Recall that Quillen's notion of a model structure on a category provides a context for a homotopy theory in that category. Quillen's original model structure on the category of topological spaces has as weak equivalences the weak homotopy equivalences~\cite{quillen}. This is the canonical example of a model structure and its associated homotopy category is equivalent to the usual homotopy category of CW-complexes. On the other hand, Arne Str\o m proved in~\cite{strom} that the category of all topological spaces has a model category structure where the weak equivalences are the (strong) homotopy equivalences. The homotopy category associated to this model structure recovers the more naive homotopy category in which morphisms between spaces are homotopy classes of continuous maps.

There is an analogous situation for the category of chain complexes of $R$-modules. In Chapter~2.3 of~\cite{hovey-model-categories}, Hovey describes a projective model structure on chain complexes having as weak equivalences the homology isomorphisms. The associated homotopy category is the unbounded derived category $\class{D}(R)$. (Quillen originally did this for bounded below chain complexes.) But there is a Str\o m-type model structure on chain complexes as well which has as weak equivalences the chain homotopy equivalences. In analogy with topological spaces, the resulting homotopy category is the naive homotopy category where maps are homotopy classes of chain maps. This was the result proved in the paper~\cite{golasinski and gromadzki}.

And so the same should be true for the category of $N$-complexes. In~\cite{gill-hovey-generalized derived cats}, the authors constructed a Quillen model structure on the category of $N$-complexes which generalizes the usual projective model structure on chain complexes constructed in chapter~2.3 of~\cite{hovey-model-categories}. This model structure on $N$-complexes can be viewed as a model for amplitude homology theory since the weak equivalences are the amplitude homology isomorphisms. The main result of the current paper is the existence of a Str\o m type model structure on $N$-complexes. This statement appears in Theorem~\ref{them-the homotopy category of $N$-complexes is a homotopy category}.

Our techniques are entirely different than those in~\cite{golasinski and gromadzki}. We use Hovey's method of cotorsion pairs to construct the model structure. This method was written in the language of exact categories in~\cite{gillespie-exact model structures}. We will see that the model structure is ``Frobenius'' in the sense that it exists on an exact category and every object is both cofibrant and fibrant.

The paper should be quite accessible to anyone with just a bit of familiarity with chain complexes and either model categories or cotorsion pairs. In Section~\ref{section-preliminaries} we give a summary of any background information needed on $N$-complexes and cotorsion pairs/model categories. In Section~\ref{section-contractible $N$-complexes} we make a brief study of contractible $N$-complexes, which are the trivial objects in the model structure. In particular, we characterize contractible complexes as direct sums of $N$-disks in Theorem~\ref{them-contractible N-complexes are direct sums of $N$-disks} and as the projective and injective objects in an exact category in Proposition~\ref{prop-contractible complexes are projective and injective}. We also prove that two chain maps are homotopic if and only if their difference factors through a contractible $N$-complex (Corollary~\ref{cor-factoring through a contractible}). The main result is proved in Section~\ref{section-main theorem} as Theorem~\ref{them-the homotopy category of $N$-complexes is a homotopy category}.

\section{Preliminaries: $N$-complexes and Hovey pairs}\label{section-preliminaries}

In this section we review the central concepts that are related in this paper: $N$-complexes and model structures. We provide references to the literature for more complete explanations.

\subsection{The category of $N$-complexes} We will mostly follow the original notation and definitions of~\cite{kapranov} and~\cite{kassel-wambst} when working with $N$-complexes.

Throughout this paper $R$ denotes a ring with unity and $N \geq 2$ is an integer. One should think of an $N$-complexes as a generalized chain complex. Precisely, an \emph{$N$-complex} is a sequence of $R$-modules and maps $$\cdots \xrightarrow{d_{n+2}} X_{n+1} \xrightarrow{d_{n+1}} X_n \xrightarrow{d_n} X_{n-1} \xrightarrow{d_{n-1}} \cdots$$ satisfying $d^N = 0$. That is, composing any $N$-consecutive maps gives 0. So a 2-complex is chain complex in the usual sense. A \emph{chain map} or simply \emph{map} $f : X \xrightarrow{} Y$ of $N$-complexes is a collection of maps $f_n : X_n \xrightarrow{} Y_n$ making all the rectangles commute. In this way we get a category of $N$-complexes, denoted $N\text{-Ch}(R)$, whose objects are $N$-complexes and whose morphisms are chain maps. This is an abelian category with all limits and colimits taken degreewise.

Given an $R$-module $M$, we define an $N$-complex $D_n(M)$ by letting it equal $M$ in
degrees $n, n-1 , n-2, \cdots , n - (N-1)$ , all joined by identity
maps, and 0 in every other degree. We will call it the \emph{disk on $M$ of degree $n$}. So when $N = 2$, we get that $D_n(M)$ is the usual disk on $M$ used in algebraic topology.

Next, for an $N$-complex $X$ note that there are $N-1$ choices for
homology. Indeed for $t = 1 , 2 \cdots , N$ we define ${}_tZ_n(X) =
\ker{(d_{n-(t-1)} \cdots d_{n-1}d_n)}$. In particular, we have
${}_1Z_n(X) = \ker{d_n}$ and ${}_NZ_n(X) = X_n$. Next, for $t = 1 , 2
\cdots , N$ we define ${}_tB_n(X) = \im{(d_{n+1} d_{n+2} \cdots
d_{n+t})}$. In particular, ${}_1B_n(X) = \im{d_{n+1}}$ and ${}_NB_n(X) =
0$. Finally, we define ${}_tH_n(X) = {}_tZ_n(X)/{}_{N-t}B_n(X)$ for $t =
1,2,\cdots,N-1$. Following~\cite{amplitude cohomology} we call these modules the amplitude homology modules of $X$.

\begin{definition}
Let $X$ be an $N$-complex. We call ${}_tH_n(X)$ the \emph{amplitude $t$ homology module of degree $n$} (or the \emph{$n^{\text{th}}$ amplitude $t$ homology module of $X$}). We say $X$ is \emph{$N$-exact}, or
just \emph{exact}, if ${}_tH_n(X) = 0$ for each $n$ and all $t =
1,2,\cdots,N-1$.
\end{definition}

The facts in the following proposition are fundamental.

\begin{proposition}\label{prop-exactness of $N$-complexes} We have the following properties on exactness of $N$-complexes.
\begin{enumerate}
\item An $N$-complex $X$ is exact if and only if for any fixed amplitude $t$ we have ${}_tH_n(X) = 0$ for each $n$.

\item Suppose $0 \xrightarrow{} X \xrightarrow{} Y \xrightarrow{} Z \xrightarrow{} 0$ is a short exact sequence of
$N$-complexes. If any two out of the three are exact, then so is the third.
\end{enumerate}
\end{proposition}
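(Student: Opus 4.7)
The plan is to prove (1) first, after which (2) reduces to short diagram chases at one convenient amplitude. For (1), the strategy is to bootstrap ``sideways'' in the amplitude index $t$: starting from ${}_{t_0}H_n(X)=0$ for all $n$ at one fixed $t_0 \in \{1,\dots,N-1\}$, first propagate the vanishing upward to obtain ${}_{N-1}H_n(X)=0$, then combine the two to propagate downward to all smaller amplitudes.

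For the upward step, given $x \in {}_{t+1}Z_n$, I would set $w = d_n(x)$ and observe that $w$ automatically lies in ${}_tZ_{n-1}$; the hypothesis yields $w = d_n \cdots d_{n+N-t-1}(y)$ for some $y$, so $x - d_{n+1}\cdots d_{n+N-t-1}(y)$ sits in $\ker d_n \subseteq {}_tZ_n = {}_{N-t}B_n$ and hence equals $d_{n+1}\cdots d_{n+N-t}(z)$; regrouping displays $x$ as $d_{n+1}\cdots d_{n+N-t-1}(y + d_{n+N-t}(z)) \in {}_{N-t-1}B_n$. The downward step is dual: the inclusion ${}_{t-1}Z_n \subseteq {}_{N-1}Z_n = {}_1B_n$ writes $x = d_{n+1}(y)$, and the kernel condition on $x$ forces $y \in {}_tZ_{n+1} = {}_{N-t}B_{n+1}$, whence $y = d_{n+2}\cdots d_{n+N-t+1}(z)$ and $x = d_{n+1}\cdots d_{n+N-t+1}(z) \in {}_{N-t+1}B_n$.

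For (2), with (1) in hand, I would fix amplitude $t=1$, so exactness at every $n$ means $\ker d_n = \im(d_{n+1}\cdots d_{n+N-1})$, and run three snake-lemma-style chases. In the representative case where $X$ and $Y$ are exact, I would start with a cycle $z \in Z_n$, lift to $y \in Y_n$; then $d_n(y)$ lands in $X_{n-1}$ and is killed by the next $N-1$ differentials, so exactness of $X$ at amplitude $N-1$ rewrites it as $d_n(x)$ for some $x \in X_n$; then $y - x$ is a cycle in $Y_n$, which by exactness of $Y$ at amplitude $1$ equals $d_{n+1}\cdots d_{n+N-1}(y')$; projecting back to $Z$ shows $z$ is a ${}_{N-1}$-boundary in $Z$. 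The other two cases (with $X,Z$ or $Y,Z$ exact) follow parallel chases, using the same interchange of amplitudes $1$ and $N-1$ afforded by part (1).

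The principal obstacle I anticipate is purely bookkeeping in part (1): keeping composition lengths, index ranges, and the three basic inclusions ${}_sZ_n \subseteq {}_{s+1}Z_n$, ${}_{s+1}B_n \subseteq {}_sB_n$, and ${}_sB_n \subseteq {}_{N-s}Z_n$ synchronized at every step, especially at the endpoints $t=1$ and $t=N-1$. Once (1) is set up, the chases for (2) follow the classical $N=2$ snake-lemma template with only minor index shifts.
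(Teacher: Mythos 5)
Your argument is correct. Note that the paper itself does not prove this proposition at all: it simply cites Proposition~1.5 of Kapranov for part~(1) and Lemma~4.4 of Gillespie--Hovey for part~(2), so you have supplied a genuine self-contained proof where the paper offers only references. Your proof of (1) is the standard amplitude-shifting argument and the index bookkeeping checks out: the upward step correctly passes from ${}_{t}H_\ast=0$ to ${}_{t+1}H_\ast=0$ (using $\ker d_n \subseteq {}_tZ_n$ and the regrouping $d_{n+1}\cdots d_{n+N-t}(z)=d_{n+1}\cdots d_{n+N-t-1}(d_{n+N-t}(z))$), and the downward step correctly uses the already-established ${}_{N-1}Z_n={}_1B_n$ together with the hypothesis at amplitude $t$. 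For (2), your representative chase (with $X,Y$ exact) is right, and the other two cases do go through at amplitude $1$ by the same pattern --- e.g.\ for $Y,Z$ exact one lifts the witness $y$ with $x=d_{n+1}\cdots d_{n+N-1}(y)$ into $X$ by correcting it with an element of $\im d_{n+N}$, which dies under the $(N-1)$-fold composite since $d^N=0$. The only thing I would ask you to make explicit in a final write-up is that the downward induction is well-founded (each step consumes the vanishing at the amplitude just above, plus the fixed input ${}_{N-1}H_\ast=0$ produced by the upward pass), but as stated there is no gap.
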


\begin{proof}
A proof of the first statement appears as Proposition~1.5 of~\cite{kapranov} and a proof of the second can be found as Lemma~4.4 of~\cite{gill-hovey-generalized derived cats}.
\end{proof}

\begin{definition}\label{def-homotopic maps}
Two chain maps $f,g : X \xrightarrow {} Y$ of $N$-complexes are called \emph{chain homotopic}, or simply \emph{homotopic} if there exists a collection $\{\, s_n : X_n \xrightarrow{} Y_{n+N-1} \,\}$ such that $g_n - f_n = d^{N-1}s_n + d^{N-2}s_{n-1}d + d^{N-3}s_{n-2}d^2 + \cdots + s_{n-(N-1)}d^{N-1}$ for each $n$. More succinctly, we denote this $$g - f = \sum_{i=0}^{N-1} d^{N-1-i}sd^i.$$ If $f$ and $g$ are homotopic, then we write $f \sim g$. We also call a map $f$ \emph{null homotopic} if $f \sim 0$.
\end{definition}

It is easy to check that $\sim$ is an equivalence relation on Hom sets. Furthermore, one can easily check that if $g_1 \sim g_2$, then $g_1f \sim g_2f$. Similarly, if $f_1 \sim f_2$, then $gf_1 \sim gf_2$. It follows that if $f_1 \sim f_2$ and $g_2 \sim g_2$ then $g_1f_1 \sim g_2f_2$. That is, composition respects chain homotopy. This gives us the following definitions.

\begin{definition}
There is a category $N\text{-}\class{K}(R)$, called the \emph{homotopy category of $N$-complexes}, whose objects are the same as those of $N\text{-Ch}(R)$ and whose Hom sets are the $\sim$ equivalence classes of Hom sets in $N\text{-Ch}(R)$. An isomorphism in $N\text{-}\class{K}(R)$ is called a \emph{chain homotopy equivalence}. These are the maps $f : X \xrightarrow{} Y$  for which there exists a map $g : Y \xrightarrow{} X$ such that $gf$ and $fg$ are chain homotopic to the proper identity maps.
\end{definition}

The above definitions clearly extend standard definitions important to chain complexes ($N=2$). The following proposition illuminates this further.

\begin{proposition}\label{prop-homotopic maps induce equal maps on amplitude homology}
$N\text{-}\class{K}(R)$ is an additive category and the canonical functor $\gamma : N\text{-Ch}(R) \xrightarrow{} N\text{-}\class{K}(R)$ defined by $f \mapsto [f]$ is additive. Moreover, the amplitude homology functors ${}_tH_n : N\text{-Ch}(R) \xrightarrow{} R\text{-Mod}$ factor through $\gamma$.
\end{proposition}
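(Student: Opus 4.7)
The plan is to dispatch the three assertions separately, leveraging the compatibility of $\sim$ with the additive structure and with composition remarked on just before the statement.

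For the additivity of $N\text{-}\class{K}(R)$, the zero object and finite biproducts are inherited directly from $N\text{-Ch}(R)$, so the only real content is checking that each Hom set carries a well-defined abelian group structure and that composition is $\mathbb{Z}$-bilinear. Both reduce to the observation that if $f_i \sim g_i$ via homotopies $\{s^{(i)}_n\}$ for $i=1,2$, then $f_1+f_2 \sim g_1+g_2$ via $\{s^{(1)}_n + s^{(2)}_n\}$, which is immediate from the additivity of the operators $d^{N-1-i}(-)d^i$ appearing in Definition~\ref{def-homotopic maps}. The additivity of $\gamma$ is then tautological, since addition on $\sim$-classes was defined by $[f]+[g]:=[f+g]$.

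The substantive point, and the main thing requiring care, is that the amplitude homology functors descend through $\gamma$. Since each ${}_tH_n$ is additive on maps, it suffices to show that $f \sim 0$ implies ${}_tH_n(f)=0$. Suppose $f_n = \sum_{i=0}^{N-1} d^{N-1-i} s_{n-i} d^i$ and let $x \in {}_tZ_n(X) = \ker d^t$. Because $d^t x = 0$, every summand with $i \geq t$ vanishes, leaving $f_n(x) = \sum_{i=0}^{t-1} d^{N-1-i} s_{n-i} d^i(x)$. In each remaining term $N-1-i \geq N-t$, so the summand factors as $d^{N-t}\bigl(d^{t-1-i}s_{n-i}d^i(x)\bigr)$. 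Hence $f_n(x) \in \im d^{N-t} = {}_{N-t}B_n(Y)$, so its class in ${}_tH_n(Y) = {}_tZ_n(Y)/{}_{N-t}B_n(Y)$ vanishes.

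No step presents a real obstacle; the only point requiring attention is the index bookkeeping in the homotopy formula when isolating and refactoring the surviving summands.
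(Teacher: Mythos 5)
Your proof is correct and follows essentially the same route as the paper: additivity of $N\text{-}\class{K}(R)$ and of $\gamma$ via adding homotopies, then reducing the homology statement to showing null homotopic maps induce zero. The only difference is that you actually carry out the computation showing $f_n({}_tZ_n(X)) \subseteq {}_{N-t}B_n(Y)$ (with correct index bookkeeping), whereas the paper leaves this as an exercise with a citation to Kapranov.
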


\begin{proof}
First we must show that if $f_1 \sim f_2$ and $g_1 \sim g_2$ then $f_1 + g_1 \sim f_2 + g_2$. But if $f_2 - f_1 = \sum_{i=0}^{N-1} d^{N-1-i}sd^i$ and $g_2 - g_1 = \sum_{i=0}^{N-1} d^{N-1-i}td^i$, then adding we get $$(f_2 - f_1)+(g_2 - g_1) = \sum_{i=0}^{N-1} d^{N-1-i}sd^i + \sum_{i=0}^{N-1} d^{N-1-i}td^i$$ from which we get $(f_2 + g_2)-(f_1 + g_1) = \sum_{i=0}^{N-1} d^{N-1-i}(s+t)d^i$, which proves what we want.

Since composition and addition are well defined on homotopy classes, it now follows that $N\text{-}\class{K}(R)$ inherits the bilinear composition from $N\text{-Ch}(R)$, making $N\text{-}\class{K}(R)$ an additive category (since it also inherits the zero object and biproducts). Now setting $\gamma(f) = [f]$ automatically gives an additive functor. To show that ${}_tH_n$ factors through $\gamma$ it is enough to show that if $f$ is null homotopic, then the induced amplitude homology maps ${}_tH_n(f)$ are all zero. This makes a nice exercise but also can be found in~\cite{kapranov} Proposition~1.11.
\end{proof}

\subsection{Model structures and hovey pairs}

In~\cite{hovey}, Hovey described a one-to-one correspondence between well behaved model category structures on an abelian category $\cat{A}$ and so-called cotorsion pairs in $\cat{A}$. A cotorsion pair is essentially a pair of classes of objects $(\class{F},\class{C})$  which are orthogonal with respect to the functor $\Ext^1_{\cat{A}}(-,-)$. For example, if $R$ is a ring and $\class{A}$ is the class of all $R$-modules while $\class{P}$ is the class of all projective modules and $\class{I}$ is the class of all injective modules, then $(\class{P},\class{A})$ and $(\class{A},\class{I})$ are cotorsion pairs. Furthermore if $\class{F}$ is the class of flat modules and $\class{C}$ is the class of cotorsion modules, then $(\class{F},\class{C})$ is a cotorsion pair. The text~\cite{enochs-jenda-book} is a standard reference on cotorsion pairs.

We will use a version of Hovey's correspondence theorem (from~\cite{hovey}) couched in the language of exact categories. The notion of an exact category was also introduced by Quillen in~\cite{quillen-algebraic K-theory}.  An exact category is a pair $(\class{A},\class{E})$ where $\class{A}$ is an additive category and $\class{E}$ is a class of ``short exact sequences'': That is, triples of objects connected by arrows $A \xrightarrow{i} B \xrightarrow{p} C$ such that $i$ is the kernel of $p$ and $p$ is the cokernel of $i$. A map such as $i$ is necessarily a monomorphism while $p$ an epimorphism. In the language of exact categories $i$ is called an \emph{admissible monomorphism} while $p$ is called an \emph{admissible epimorphism}. The class $\class{E}$ of short exact sequences must satisfy several axioms which are inspired by familiar properties of short exact sequences in any abelian category. As a result many concepts that make sense in abelian categories, such as the extension functor $\Ext$ and cotorsion pairs, still make sense in exact categories. The reader should be able to find any needed facts on exact categories, including cotorsion pairs in exact categories, and model structures on exact categories (exact model structures) nicely summarized in Sections~2 and~3 of~\cite{gillespie-exact model structures}. One can also see B\"uhler's paper~\cite{buhler-exact categories} for a very thorough and readable exposition on exact categories. For easy reference we now state Hovey's theorem which is applied in Section~\ref{section-main theorem} to obtain the desired model structure on $N$-complexes. The definition of \emph{thick} is given in Section~\ref{section-main theorem}.

\begin{theorem}[Hovey's correspondence theorem]\label{them-Hovey's theorem}
Let $(\class{A},\class{E})$ be a (weakly idempotent complete) exact category. Then there is a one-to-one correspondence between exact model structures on $\cat{A}$ and complete cotorsion pairs $(\class{Q},\class{R} \cap \class{W})$ and $(\class{Q} \cap \class{W} , \class{R})$ where $\class{W}$ is a thick subcategory of $\cat{A}$. Given a model structure, $\class{Q}$ is the class of cofibrant objects, $\class{R}$ the class of fibrant objects and $\class{W}$ the class of trivial objects. Conversely, given the cotorsion pairs with $\class{W}$ thick, a cofibration (resp. trivial cofibration) is an admissible monomorphism with a cokernel in $\class{Q}$ (resp. $\class{Q} \cap \class{W}$), and a fibration (resp. trivial fibration) is an admissible epimorphism with a kernel in $\class{R}$ (resp. $\class{R} \cap \class{W}$). The weak equivalences are then the maps $g$ which factor as $g = pi$ where $i$ is a trivial cofibration and $p$ is a trivial fibration.
\end{theorem}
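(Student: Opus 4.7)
The plan is to verify Quillen's model category axioms starting from the two complete cotorsion pairs with $\mathcal{W}$ thick; the reverse direction (extracting the cotorsion pairs from a given exact model structure) is the easier one and amounts to observing that admissible monomorphisms with cokernel in $\mathcal{Q}$ correspond to left lifting against trivial fibrations, so I focus on manufacturing a model structure from the cotorsion-pair data.

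Define the three distinguished classes of morphisms exactly as in the statement. Closure of cofibrations and fibrations under retracts reduces to closure of the classes $\mathcal{Q}$ and $\mathcal{R}$ under retracts, which holds for any cotorsion pair. The lifting axioms come out most directly: a lifting problem with a trivial cofibration $i$ on the left (cokernel $K \in \mathcal{Q} \cap \mathcal{W}$) and a fibration $p$ on the right (kernel $L \in \mathcal{R}$) translates, via the long exact sequence of $\mathrm{Ext}$ in the exact category, to an obstruction in $\mathrm{Ext}^1(K,L)$; this vanishes by the cotorsion-pair condition, producing the desired lift. The dual lifting axiom is symmetric.

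For the factorization axioms, given a morphism $f \colon X \to Y$, one first replaces $f$ by an admissible monomorphism using the standard graph-embedding trick. Then apply completeness of the cotorsion pair $(\mathcal{Q}, \mathcal{R} \cap \mathcal{W})$ to obtain an approximation short exact sequence and, after a routine pushout/pullback manipulation, extract a factorization $f = p \circ i$ where $i$ is an admissible monomorphism with cokernel in $\mathcal{Q}$ (a cofibration) and $p$ is an admissible epimorphism with kernel in $\mathcal{R} \cap \mathcal{W}$ (a trivial fibration). The other factorization comes from applying the same argument with the cotorsion pair $(\mathcal{Q} \cap \mathcal{W}, \mathcal{R})$.

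The main obstacle is the 2-out-of-3 property for weak equivalences, which is where thickness of $\mathcal{W}$ is essential. The key step is to establish an intrinsic characterization: an admissible monomorphism is a weak equivalence if and only if its cokernel lies in $\mathcal{W}$, and dually an admissible epimorphism is a weak equivalence iff its kernel lies in $\mathcal{W}$. Once this characterization is in place, 2-out-of-3 reduces to the short-exact-sequence diagram chase which says that any admissible short exact sequence with two terms in $\mathcal{W}$ has its third term in $\mathcal{W}$ --- this is precisely what thickness provides. Closure of weak equivalences under retracts then follows from the same characterization together with closure of $\mathcal{W}$ under retracts, which is itself a consequence of thickness.
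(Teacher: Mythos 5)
The paper does not actually prove this statement: it is Hovey's correspondence theorem, quoted for reference and attributed to \cite{hovey} in the exact-category formulation of \cite{gillespie-exact model structures}, so there is no in-paper argument to measure yours against. Your outline does track the standard published proof: lifting from the vanishing of $\Ext^1$ across each cotorsion pair, factorizations from completeness, and two-out-of-three from thickness via the characterization of weak equivalences among admissible monomorphisms and epimorphisms. Still, two steps are too thin to stand as written. First, the plain graph embedding $X \to X\oplus Y \to Y$ does not deliver the (cofibration, trivial fibration) factorization: after you factor the admissible monomorphism $(1,f)$ as a cofibration followed by a trivial fibration, the remaining composite onto $Y$ is an admissible epimorphism whose kernel is an extension of $X$ by an object of $\class{R}\cap\class{W}$, and $X$ itself need not lie in $\class{R}\cap\class{W}$. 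One must instead embed $X$ into $R'\oplus Y$ using a special preenvelope $X \rightarrowtail R'$ with $R'\in\class{R}\cap\class{W}$, or follow Hovey's original two-step construction (a special precover of $Y$, then a pushout along a special preenvelope of the resulting kernel), so that the epimorphism you peel off at the end really is a trivial fibration.

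Second, the two-out-of-three axiom does not simply ``reduce to'' closure of $\class{W}$ under two-out-of-three in short exact sequences once the characterization of weak equivalences among admissible monos and epis is available. Weak equivalences are arbitrary composites $p\circ i$, so even their closure under composition requires an argument (rewriting a composite of the form trivial cofibration, trivial fibration, trivial cofibration, trivial fibration as a single trivial cofibration followed by a single trivial fibration, via a lifting/retract argument). The real content of this axiom --- the bulk of the technical work in \cite{hovey} and its exact-category version in \cite{gillespie-exact model structures} --- is the chain of reductions relating the cokernels and kernels occurring in factorizations of $f$, $g$, and $gf$; thickness of $\class{W}$ is indeed what makes each reduction close, but the reductions themselves are the labor. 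As a roadmap your proposal is faithful to the known proof; as a proof it leaves its hardest verification unargued.
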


Recently a pair of cotorsion pairs $(\class{Q},\class{R} \cap \class{W})$ and $(\class{Q} \cap \class{W} , \class{R})$ as in the above theorem have been referred to as a \emph{Hovey pair}.

\begin{remark} Hovey's theorem in~\cite{hovey} already allowed for ``proper classes'' of short exact sequences (defined in Section~XII.4 of~\cite{homology}) which in fact give rise to exact categories (by an argument that can be found in Theorem~4.3 of Section~XII.4 in~\cite{homology}). However, exact categories are slightly more general in that they allow for certain full subcategories of abelian categories. (For example, the category of all projective $R$-modules along with the collection of all short exact sequences between these modules forms an exact category. However, this can not be construed as an abelian category along with a proper class.) In any case, one needs to make a choice of language. It could be the language of proper classes of short exact sequences in an abelian category or the language of exact categories. For the current paper either would work, but we choose the second.
\end{remark}

\section{Contractible $N$-complexes}\label{section-contractible $N$-complexes}

Recall that a chain complex is contractible if its identity map is null homotopic. In this case, it is rather immediate that the chain complex is the direct sum of disks on its cycle modules. In this section and the next we derive several results on contractible $N$-complexes. Our first result below is a generalization to $N >2$ the decomposition into a direct sum of $N$-disks. One sees that a complication arises immediately when $N >2$.

\begin{definition}
We call an $N$-complex $C$ \emph{contractible} if its identity map $1_C$ is null homotopic.
\end{definition}

\begin{lemma}\label{lemma-splittings}
Suppose we have a map $g : X \xrightarrow{} Y$ of $R$-modules having a ``splitting'' $s : Y \xrightarrow{} X$ satisfying $gsg = g$. Then $X = \ker{g} \oplus \im{sg}$. Moreover, the pair of maps $(g,s)$ restrict to an isomorphism pair $g : \im{sg} \xrightarrow{} \im{g}$, and $s : \im{g} \xrightarrow{} \im{sg}$.
\end{lemma}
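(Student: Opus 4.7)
The plan is to exploit the identity $gsg = g$ to produce an idempotent on $X$ and then appeal to the standard splitting of an idempotent endomorphism. First I would note that $e := sg$ is an idempotent endomorphism of $X$, since $e^2 = (sg)(sg) = s(gsg) = sg = e$. From this observation alone, the familiar argument via $x = (x - e(x)) + e(x)$ yields the direct sum decomposition $X = \ker e \oplus \im e$.

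Next I would identify $\ker(sg)$ with $\ker g$. One inclusion is automatic: $g(x) = 0$ implies $sg(x) = 0$. For the reverse inclusion, if $sg(x) = 0$ then applying $g$ and using the hypothesis $gsg = g$ gives $g(x) = gsg(x) = g(0) = 0$. So $\ker(sg) = \ker g$, and the decomposition upgrades to $X = \ker g \oplus \im(sg)$, which is the first assertion.

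For the isomorphism claim, I would verify directly that the restrictions $g : \im(sg) \to \im g$ and $s : \im g \to \im(sg)$ are mutually inverse. For any $y = g(x) \in \im g$, the composite satisfies $g(s(y)) = gsg(x) = g(x) = y$, so $gs$ is the identity on $\im g$. Conversely, for $z = sg(x) \in \im(sg)$ we have $s(g(z)) = sgsg(x) = sg(x) = z$, so $sg$ is the identity on $\im(sg)$. There is no real obstacle in the argument: the lemma is essentially the standard fact that an idempotent endomorphism splits its domain, with the bonus feature that the kernel summand admits the intrinsic description $\ker g$ (and the image summand the intrinsic description $\im(sg) \cong \im g$) courtesy of the splitting identity $gsg = g$.
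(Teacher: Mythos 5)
Your proof is correct and follows essentially the same route as the paper's: the decomposition via $x = (x - sg(x)) + sg(x)$ and the use of $gsg = g$ to kill the intersection are exactly the paper's steps (i) and (ii), merely repackaged as the splitting of the idempotent $e = sg$ together with the identification $\ker(sg) = \ker g$. You also write out the verification that the restricted maps $g$ and $s$ are mutually inverse, which the paper leaves to the reader.
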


\begin{proof}
This is a variation of an elementary result.  We wish to show (i) $X = \ker{g} + \im{sg}$ and (ii) $\ker{g} \cap \im{sg} = 0$. For (i), let $x \in X$. Then one easily checks that $x - sg(x) \in \ker{g}$ and so $x = [x - sg(x)]+sg(x) \in \ker{g} + \im(sg)$. For (ii), say $z \in \ker{g} \cap \im{sg}$ We write $z = sg(x)$ (some $x \in X$) and suppose $g(z) = 0$. Then $0 = gsg(x) = g(x)$. Therefore $sg(x) = 0$ too. So $z = 0$. This proves that $X = \ker{g} \oplus \im{sg}$. It is clear that $g$ restricts to a map $g : \im{sg} \xrightarrow{} \im{g}$, and $s$ to a map $s : \im{g} \xrightarrow{} \im{sg}$. It is easy to check directly that these are isomorphisms and inverses.
\end{proof}

\begin{theorem}\label{them-contractible N-complexes are direct sums of $N$-disks}
An $N$-complex $C$ is contractible if and only if it is a direct biproduct of $N$-disks $$C = \bigoplus_{n \in \Z} D^N_n(M_n) = \prod_{n \in \Z} D^N_n(M_n)$$ for some set of $R$-modules $\{\, M_n \,\}_{n \in \Z}$. In fact, in this case $M_n = {}_1Z_{n-(N-1)}C$.
\end{theorem}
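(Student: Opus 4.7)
The plan is to prove both directions explicitly by producing concrete homotopies and isomorphisms. For the easy direction, I will verify directly that any $N$-disk $D^N_n(M)$ is contractible by exhibiting the explicit homotopy $s$ that is the identity $M \to M$ from degree $n-(N-1)$ up to degree $n$ and zero elsewhere, and then observe that a direct sum of contractible $N$-complexes is contractible with the evident componentwise homotopy. The direct sum agrees with the direct product in each fixed degree $k$ because only the $N$ summands with $n \in \{k, k+1, \ldots, k+N-1\}$ contribute.

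For the forward direction, suppose $s$ is a homotopy realizing $1_C \sim 0$, so that $1_{C_k} = \sum_{i=0}^{N-1} d^{N-1-i} s d^i$ in each degree. I will first extract two useful consequences by composing this identity with powers of $d$: multiplying on the right by $d^{N-1}$ and using $d^N = 0$ gives $d^{N-1} s d^{N-1} = d^{N-1}$; and evaluating the homotopy identity at any $x \in \ker d$ immediately yields $x = d^{N-1} s(x)$. Setting $M_n := {}_1Z_{n-(N-1)}(C)$, these show that $\tau_n := s|_{M_n} \colon M_n \to C_n$ is a canonical lift of the inclusion $M_n \hookrightarrow C_{n-(N-1)}$ along $d^{N-1}$.

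Using these lifts, I will define a candidate isomorphism $\Phi \colon \bigoplus_{n \in \Z} D^N_n(M_n) \to C$ degreewise by the formula
$$\Phi_k(x_k, x_{k+1}, \ldots, x_{k+N-1}) = \sum_{j=0}^{N-1} d^j s(x_{k+j}),$$
and verify routinely that $\Phi$ is a chain map: the bottom piece $x_{k+N-1}$ contributes $d^N s(x_{k+N-1}) = 0$ under $d$, matching the vanishing of the disk differential at its bottom. To prove $\Phi_k$ is an isomorphism in each degree I will construct its inverse by an iterative extraction. Given $y \in C_k$, set $x_k := d^{N-1}(y)$, which lies in $M_k$ since $d^N(y) = 0$, and $y^{(1)} := y - s(x_k)$, which lies in $\ker d^{N-1}$ because $e := sd^{N-1}$ is an idempotent endomorphism of $C_k$ with $\ker e = \ker d^{N-1}$. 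Inductively set $x_{k+j} := d^{N-1-j}(y^{(j)}) \in M_{k+j}$ and $y^{(j+1)} := y^{(j)} - d^j s(x_{k+j})$, showing by induction that $y^{(j)} \in \ker d^{N-j}$, until finally $y^{(N-1)}$ lies in $\ker d$ and equals $x_{k+N-1}$, with $y^{(N)} = 0$. This simultaneously proves surjectivity and, by applying the same extraction to a tuple annihilated by $\Phi_k$, injectivity.

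The main obstacle will be the index bookkeeping in the iterative extraction: at each stage I need to verify that the residual $y^{(j)}$ lies in the correct power $\ker d^{N-j}$ so that $d^{N-1-j}(y^{(j)})$ lands in the amplitude-$1$ kernel $M_{k+j}$. Once this filtration-by-powers-of-$d$ argument is organized cleanly, everything else reduces to unwinding the homotopy identity and using $d^N = 0$.
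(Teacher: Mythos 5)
Your proof is correct, and the forward direction is organized genuinely differently from the paper's. The paper applies Lemma~\ref{lemma-splittings} to the relation $d^{N-1}sd^{N-1}=d^{N-1}$ to obtain $C_n={}_{N-1}Z_n\oplus s[{}_1Z_{n-(N-1)}]$, and then inductively splits the filtration ${}_1Z_n\subseteq\cdots\subseteq{}_{N-1}Z_n$, peeling off the summands $ds[{}_1Z_{n+1-(N-1)}]$, $d^2s[{}_1Z_{n+2-(N-1)}]$, and so on; the spanning half of each of those splittings invokes the $N$-exactness of $C$ (via Proposition~\ref{prop-homotopic maps induce equal maps on amplitude homology}). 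You instead write down the chain isomorphism $\Phi$ globally and invert it degreewise by your extraction procedure, and the two consequences you isolate from the homotopy identity --- $d^{N-1}sd^{N-1}=d^{N-1}$ and $d^{N-1}s(x)=x$ for $x\in\ker d$ --- are all you need. In particular you never use exactness of $C$, a small but real economy, and your argument produces an explicit inverse rather than an existence-of-splitting argument; the summand $\Phi_k(M_{k+j})=d^js[{}_1Z_{k+j-(N-1)}]$ of $C_k$ is exactly the paper's $j$-th summand, so the two decompositions coincide. I verified your induction: $y^{(j)}\in\ker d^{N-j}$ passes to $y^{(j+1)}$ because $d^{N-1-j}y^{(j+1)}=d^{N-1-j}y^{(j)}-d^{N-1}s\bigl(d^{N-1-j}y^{(j)}\bigr)=0$, using that $d^{N-1-j}y^{(j)}\in\ker d$. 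The one place to be explicit is injectivity: a right inverse to a surjection does not by itself give injectivity, so you should record that the extraction applied to $\Phi_k(x_k,\ldots,x_{k+N-1})$ returns the same tuple, i.e.\ that $d^{N-1-j}\bigl(\sum_{l\ge j}d^{l}s(x_{k+l})\bigr)=x_{k+j}$, which is immediate from $d^N=0$ together with $d^{N-1}s=1$ on $\ker d$; routine, but it is the step your phrase ``by applying the same extraction'' is standing in for. The easy direction and the biproduct observation match the paper's argument.
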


\begin{proof}
First we note that for some set of $R$-modules $\{\, M_n \,\}_{n \in \Z}$ we indeed have $\bigoplus_{n \in \Z} D^N_n(M_n) = \prod_{n \in \Z} D^N_n(M_n)$ since there are only finitely many terms (summands) in each degree. Now suppose we are given such a complex $\bigoplus_{n \in \Z} D^N_n(M_n)$ which we will denote by $X$. We wish to show $X$ is contractible. To do so, we define the maps
$$s_n : M_{n+N-1} \oplus \cdots \oplus M_{n+1} \oplus M_{n} \xrightarrow{} M_{n+2(N-1)} \oplus \cdots \oplus M_{n+N} \oplus M_{n+N-1}$$ by $s_n(x_{N-1} , \cdots , x_{1} , x_{0}) = (0 , \cdots , 0 , x_{N-1})$. It is easy to see that $\{\, s_n \,\}$ is a homotopy showing $1_X \sim 0$.

Next suppose that $C$ is a contractible complex, so $1_C \sim 0$. We will denote the cycle modules ${}_tZ_{n}C$ of $C$ simply by ${}_tZ_{n}$ for this proof. We immediately have from Proposition~\ref{prop-homotopic maps induce equal maps on amplitude homology} that $C$ is $N$-exact. We will show that $C$ is isomorphic to the direct sum $\bigoplus_{n \in \Z} D^N_n({}_1Z_{n-(N-1)})$. First, by the definition of contractible, there exists a collection $\{\, s_n : C_n \xrightarrow{} C_{n+N-1} \,\}$ such that $1_{X_n} = d^{N-1}s_n + d^{N-2}s_{n-1}d + d^{N-3}s_{n-2}d^2 + \cdots + s_{n-(N-1)}d^{N-1}$ for each $n$. By composing both sides of the equation with $d^{N-1}$ we get that the differential satisfies $d^{N-1}sd^{N-1} = d^{N-1}$. So according to Lemma~\ref{lemma-splittings}, $s$ is a splitting of $d^{N-1} : C_n \xrightarrow{} C_{n-(N-1)}$ and gives a decomposition $C_n = {}_{N-1}Z_{n} \oplus s[{}_1Z_{n-(N-1)}]$ in each degree. Furthermore, restricting the pair $(d^{N-1},s)$ gives us an isomorphism $d^{N-1} : s[{}_1Z_{n-(N-1)}] \xrightarrow{} {}_1Z_{n-(N-1)}$ with inverse $s : {}_1Z_{n-(N-1)} \xrightarrow{} s[{}_1Z_{n-(N-1)}]$. We view $(C,d)$ as shown below:
\begin{center}
\begin{displaymath}
\xymatrix{
   &  \ar@{-->}[d] &  \ar@{-->}[dl] \\
(n+1) & {}_{N-1}Z_{n+1} \ar@{->}[d] \ar@{}[r]|{\bigoplus} & s[{}_1Z_{n+1-(N-1)}] \ar@{->}[dl] \\
(n) & {}_{N-1}Z_{n} \ar@{->}[d] \ar@{}[r]|{\bigoplus} & s[{}_1Z_{n-(N-1)}] \ar@{->}[dl] \\
(n-1)   & {}_{N-1}Z_{n-1} \ar@{}[r]|{\bigoplus} \ar@{-->}[d] & s[{}_1Z_{n-1-(N-1)}] \ar@{-->}[dl] \\
& \\
}
\end{displaymath}
\end{center}
Recall that there is a filtration ${}_1Z_{n} \subseteq {}_{2}Z_{n} \subseteq{} \cdots \subseteq {}_{N-2}Z_{n} \subseteq {}_{N-1}Z_{n}$. The plan now is to continue to show that ${}_{N-2}Z_{n}$ is a direct summand of ${}_{N-1}Z_{n}$ and likewise ${}_{N-3}Z_{n}$ is a direct summand of ${}_{N-2}Z_{n}$ and so on... So we start now by claiming ${}_{N-1}Z_{n} = {}_{N-2}Z_{n} \oplus ds[{}_1Z_{n+1-(N-1)}]$. To prove this we will show (i) ${}_{N-1}Z_{n} = {}_{N-2}Z_{n} + ds[{}_1Z_{n+1-(N-1)}]$ and (ii) ${}_{N-2}Z_{n} \cap ds[{}_1Z_{n+1-(N-1)}] = 0$. For (i), let $z \in {}_{N-1}Z_{n}$. Then by $N$-exactness we know there exists $x \in X_{n+1}$ such that $z = dx$. But we know $x = z' + s(z'')$ for some $z' \in {}_{N-1}Z_{n+1}$ and $z'' \in s[{}_1Z_{n+1-(N-1)}]$. So $z = d(z' + s(z'')) = dz' + ds(z'') \in {}_{N-2}Z_{n} + ds[{}_1Z_{n+1-(N-1)}]$. To show (ii), suppose that $x \in {}_{N-2}Z_{n} \cap ds[{}_1Z_{n+1-(N-1)}]$. Then $d^{N-2}x = 0$ but also $x = ds(z)$ for some $z \in {}_1Z_{n+1-(N-1)}$. So $0 = d^{N-2}x = d^{N-1}s(z)$. But since we know $d^{N-1} : s[{}_1Z_{n+1-(N-1)}] \xrightarrow{} {}_1Z_{n+1-(N-1)}$ is an isomorphism with inverse $s : {}_1Z_{n+1-(N-1)} \xrightarrow{} s[{}_1Z_{n+1-(N-1)}]$ we get $d^{N-1}s(z) = z$. So $0 = z$. Therefore $x = 0$ too. This completes the proof of (ii) and so we have shown ${}_{N-1}Z_{n} = {}_{N-2}Z_{n} \oplus ds[{}_1Z_{n+1-(N-1)}]$. We note that the restricted differential $d : s[{}_1Z_{n+1-(N-1)}] \xrightarrow{} ds[{}_1Z_{n+1-(N-1)}]$ is an isomorphism with inverse $sd^{N-2}$. This is because $(sd^{N-2} \circ d)(s[{}_1Z_{n+1-(N-1)}]) = sd^{N-1}s[{}_1Z_{n+1-(N-1)}] = s[{}_1Z_{n+1-(N-1)}]$, and on the other hand we have $(d \circ sd^{N-2})(ds[{}_1Z_{n+1-(N-1)}]) = dsd^{N-1}s[{}_1Z_{n+1-(N-1)}] = ds[{}_1Z_{n+1-(N-1)}]$. As a result we may now view $(C,d)$ as shown below:
\begin{center}
\begin{displaymath}
\xymatrix{
   &  \ar@{-->}[d] &  \ar@{-->}[dl] &  \ar@2{--}[dl] \\
(n+1) & {}_{N-2}Z_{n+1} \ar@{->}[d] \ar@{}[r]|{\bigoplus}  & ds[{}_1Z_{n+2-(N-1)}] \ar@{->}[dl] \ar@{->}[d] \ar@{}[r]|{\bigoplus} & s[{}_1Z_{n+1-(N-1)}] \ar@{=}[dl] \\
(n) & {}_{N-2}Z_{n} \ar@{->}[d] \ar@{}[r]|{\bigoplus} & ds[{}_1Z_{n+1-(N-1)}] \ar@{->}[dl] \ar@{->}[d] \ar@{}[r]|{\bigoplus} & s[{}_1Z_{n-(N-1)}] \ar@{=}[dl] \\
(n-1)  & {}_{N-2}Z_{n-1} \ar@{-->}[d] \ar@{}[r]|{\bigoplus}  & ds[{}_1Z_{n-(N-1)}] \ar@{}[r]|{\bigoplus} \ar@{-->}[d] & s[{}_1Z_{n-1-(N-1)}] \ar@2{--}[dl] \\
& & \\
}
\end{displaymath}
\end{center}

A similar argument shows  ${}_{N-2}Z_{n} = {}_{N-3}Z_{n} \oplus d^2s[{}_1Z_{n+2-(N-1)}]$ and so we get:

\begin{center}
\begin{displaymath}
\xymatrix{
 \ar@{-->}[d] &  \ar@{-->}[dl] &  \ar@2{--}[dl] &  \ar@2{--}[dl] \\
{}_{N-3}Z_{n+1} \ar@{->}[d] \ar@{}[r]|{\bigoplus} & d^2s[{}_1Z_{n+3-(N-1)}] \ar@{->}[dl] \ar@{}[r]|{\bigoplus} & ds[{}_1Z_{n+2-(N-1)}] \ar@{=}[dl] \ar@{}[r]|{\bigoplus} & s[{}_1Z_{n+1-(N-1)}] \ar@{=}[dl] \\
{}_{N-3}Z_{n} \ar@{->}[d] \ar@{}[r]|{\bigoplus} & d^2s[{}_1Z_{n+2-(N-1)}] \ar@{->}[dl] \ar@{}[r]|{\bigoplus} & ds[{}_1Z_{n+1-(N-1)}] \ar@{=}[dl] \ar@{}[r]|{\bigoplus} & s[{}_1Z_{n-(N-1)}] \ar@{=}[dl] \\
{}_{N-3}Z_{n-1} \ar@{-->}[d] \ar@{}[r]|{\bigoplus}  & d^2s[{}_1Z_{n+1-(N-1)}] \ar@{-->}[dl] \ar@{}[r]|{\bigoplus} & ds[{}_1Z_{n-(N-1)}] \ar@2{--}[dl] \ar@{}[r]|{\bigoplus} & s[{}_1Z_{n-1-(N-1)}] \ar@2{--}[dl] \\
& & & \\
}
\end{displaymath}
\end{center}
Continuing in this way we are led to a decomposition $C =  \bigoplus_{n \in \Z} D^N_n({}_1Z_{n-(N-1)})$.

\end{proof}

\begin{proposition}\label{prop-characterizing maps in and out of contractible complexes}
Let $C$ be contractible. So we may assume $C = \bigoplus_{n \in \Z} D^N_n(M_n)$.
\begin{enumerate}
\item Any collection of maps $\{\, u_n : X_n \xrightarrow{} M_{n+N-1} \,\}$ determines a chain map $\beta : X \xrightarrow{} C$ by setting $\beta_n = (u_n , u_{n-1} d_X , u_{n-2} d^2_X , \cdots , u_{n-(N-1)}d^{N-1}_X)$. Conversely, any chain map $\beta : X \xrightarrow{} C$ is equivalent to a collection of maps $\{\, u_n : X_n \xrightarrow{} M_{n+N-1} \,\}$ satisfying this condition.
\item Any collection of maps $\{\, q_n : M_n \xrightarrow{} Y_{n} \,\}$ determines a chain map $p : C \xrightarrow{} Y$ by setting $p_n = d_Y^{N-1}q_{n+(N-1)} + \cdots + d_Y^2q_{n+2} + d_Yq_{n+1} + q_n$. Conversely, any chain map $p : C \xrightarrow{} Y$ is equivalent to a collection of maps $\{\, q_n : M_n \xrightarrow{} Y_{n} \,\}$ satisfying this condition.
\end{enumerate}
\end{proposition}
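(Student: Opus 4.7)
The plan is essentially a coordinate chase, so I would begin by making explicit the structure of $C = \bigoplus_{n \in \Z} D^N_n(M_n)$ in each degree. Since $D^N_k(M_k)$ equals $M_k$ in exactly the degrees $k, k-1, \ldots, k-(N-1)$ with identity differentials in between, we have $C_n = M_n \oplus M_{n+1} \oplus \cdots \oplus M_{n+N-1}$, which I will write as tuples $(x_{N-1},\ldots,x_1,x_0)$ with $x_i \in M_{n+i}$. A short bookkeeping check shows that $d_C : C_n \to C_{n-1}$ is the shift $(x_{N-1},\ldots,x_0) \mapsto (x_{N-2},\ldots,x_0,0)$: within each disk $D^N_{n+i}(M_{n+i})$ with $i \leq N-2$, the differential to the next degree is the identity, while for $i = N-1$ the disk terminates at degree $n$ and the outgoing differential is zero.

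For part (1), I would write $\beta_n : X_n \to C_n$ componentwise as $(\beta_n^{N-1},\ldots,\beta_n^0)$ with $\beta_n^i : X_n \to M_{n+i}$. Then the chain condition $d_C \beta_n = \beta_{n-1} d_X$ unpacks into the recursion
\[ \beta_n^i = \beta_{n-1}^{i+1} d_X \quad (i = 0, \ldots, N-2), \qquad \beta_{n-1}^0 d_X = 0. \]
Setting $u_n := \beta_n^{N-1}$ and iterating gives $\beta_n^i = u_{n-(N-1-i)} d_X^{N-1-i}$, which is exactly the formula claimed. The leftover relation $\beta_{n-1}^0 d_X = 0$ becomes $u_{n-N} d_X^N = 0$, which holds automatically because $d_X^N = 0$. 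Hence an arbitrary collection $\{u_n\}$ produces a chain map, and conversely every chain map $\beta$ recovers such a collection by reading off $u_n := \beta_n^{N-1}$.

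For part (2), I would dually write $p_n : C_n \to Y_n$ as a row $(p_n^{N-1},\ldots,p_n^0)$ with $p_n^i : M_{n+i} \to Y_n$. Then $d_Y p_n = p_{n-1} d_C$ unpacks symmetrically to
\[ d_Y p_n^i = p_{n-1}^{i+1} \quad (i = 0, \ldots, N-2), \qquad d_Y p_n^{N-1} = 0. \]
Setting $q_n := p_n^0$ and iterating in the opposite direction yields $p_n^i = d_Y^i q_{n+i}$, so summing over the decomposition of $C_n$ recovers the stated formula $p_n = q_n + d_Y q_{n+1} + \cdots + d_Y^{N-1} q_{n+N-1}$. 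The leftover constraint $d_Y p_n^{N-1} = 0$ becomes $d_Y^N q_{n+N-1} = 0$, again automatic. Thus each collection $\{q_n\}$ defines a chain map and vice versa.

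There is no serious obstacle beyond keeping the index shifts straight. The key observation powering both halves is that the shift-type differential on $C$ pins down every component $\beta_n^i$ (respectively $p_n^i$) from a single ``free'' slot at one end of the disk, while the identity $d^N = 0$ absorbs the boundary equations that would otherwise constrain the free data $\{u_n\}$ or $\{q_n\}$.
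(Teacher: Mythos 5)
Your proof is correct and follows essentially the same route as the paper's: decompose $C_n = M_{n+N-1}\oplus\cdots\oplus M_n$, read off the components of $\beta_n$ (resp.\ $p_n$), and solve the recursion forced by commutativity with the shift differential, with $d^N=0$ absorbing the boundary equation. Your write-up is in fact somewhat more explicit than the paper's, which leaves most of the verification as ``one can check.''
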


\begin{proof}
Assume we have a collection of maps $\{\, u_n : X_n \xrightarrow{} M_{n+N-1} \,\}$ as in (1). Then it is easy to check that the diagram below commutes and so $\beta = \{\beta_n\}$ as defined is a chain map.
$$\begin{CD}
  X_n @>\beta_n >> M_{n+N-1} \oplus \cdots \oplus M_{n+1} \oplus M_{n} \\
  @Vd_XVV         @VVV                         \\
  X_{n-1} @>\beta{n-1} >> M_{n+N-2} \oplus \cdots \oplus M_{n} \oplus M_{n-1} \\
\end{CD}$$
On the other hand, suppose $\beta : X \xrightarrow{} C$ is any chain map. Then for each $n$ we must have $\beta_n = (u_n , u'_n , u''_n , \cdots , u^{N-1}_n)$ for some maps $u_n , u'_n , u''_n , \cdots , u^{N-1}_n$. One can check that commutativity of the above diagram leads to the following relations:
$$u'_n = u_{n-1} d_X  \ , \ \ u''_n = u'_{n-1} d_X  \ , \cdots , \ \ u^{N-1}_n = u^{N-2}_{n-1} d_X.$$ Then solving for each of these in terms of the $u_i$'s we get $$\beta_n = (u_n , u'_n , u''_n , \cdots , u^{N-1}_n) = (u_n , u_{n-1} d_X , u_{n-2} d^2_X , \cdots , u_{n-(N-1)}d^{N-1}_X).$$

The proof of (2) can be checked in a similar way.
\end{proof}

\begin{corollary}\label{cor-factoring through a contractible}
Let $f,g : X \xrightarrow{} Y$ be chain maps of $N$-complexes. Then $f \sim g$ if and only if $g - f$ factors through a contractible complex.
\end{corollary}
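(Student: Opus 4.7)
The proof splits into two directions. For the backward direction (``factoring implies homotopic'') I would argue formally: if $g - f = \beta \alpha$ for some chain maps $\alpha : X \to C$ and $\beta : C \to Y$ with $C$ contractible, then $1_C \sim 0$ by definition, and because composition of chain maps respects the relation $\sim$ (as observed just after Definition~\ref{def-homotopic maps}) we obtain $g - f = \beta \circ 1_C \circ \alpha \sim \beta \circ 0 \circ \alpha = 0$, whence $f \sim g$.

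For the forward direction, the plan is to repackage the homotopy data as a factorisation through a carefully chosen $N$-disk complex built from $Y$. Suppose $\{s_n : X_n \to Y_{n+N-1}\}$ is a homotopy realising $g_n - f_n = \sum_{i=0}^{N-1} d_Y^{N-1-i} s_{n-i} d_X^i$. I would set
$$C = \bigoplus_{n \in \Z} D^N_n(Y_n),$$
which is contractible by Theorem~\ref{them-contractible N-complexes are direct sums of $N$-disks}. Then Proposition~\ref{prop-characterizing maps in and out of contractible complexes}(1) applied with $u_n := s_n$ produces a chain map $\alpha : X \to C$ with
$$\alpha_n = (s_n,\ s_{n-1} d_X,\ s_{n-2} d_X^2,\ \ldots,\ s_{n-(N-1)} d_X^{N-1}),$$
while Proposition~\ref{prop-characterizing maps in and out of contractible complexes}(2) applied with $q_n := 1_{Y_n}$ produces a chain map $\beta : C \to Y$ with
$$\beta_n(y_{n+N-1},\, \ldots,\, y_n) = d_Y^{N-1} y_{n+N-1} + \cdots + d_Y y_{n+1} + y_n.$$

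All that remains is a direct computation: substituting the components of $\alpha_n(x)$ into the formula for $\beta_n$ yields
$$(\beta\alpha)_n(x) = \sum_{i=0}^{N-1} d_Y^{N-1-i}\, s_{n-i}\, d_X^i(x) = (g-f)_n(x),$$
so $g - f = \beta\alpha$ factors through the contractible complex $C$ as required. I do not anticipate any real obstacle: Theorem~\ref{them-contractible N-complexes are direct sums of $N$-disks} and Proposition~\ref{prop-characterizing maps in and out of contractible complexes} have already done the structural work, and the verification merely aligns the indexing of the homotopy formula with the shape of the $N$-disk. The one design choice worth naming is building $C$ from $Y$ (so that $s_n$ is literally the datum $u_n$ forming the map out of $X$) rather than from $X$.
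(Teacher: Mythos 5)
Your proof is correct, and the forward direction (null homotopic implies factors through a contractible) is the same as the paper's: build $C = \bigoplus_{n} D^N_n(Y_n)$, use Proposition~\ref{prop-characterizing maps in and out of contractible complexes}(1) with $u_n = s_n$ and Proposition~\ref{prop-characterizing maps in and out of contractible complexes}(2) with $q_n = 1_{Y_n}$, and check that the composite recovers the homotopy formula. Where you genuinely diverge is the backward direction. The paper again invokes the structure theory: it decomposes an arbitrary contractible $C$ as $\bigoplus_n D^N_n(M_n)$ via Theorem~\ref{them-contractible N-complexes are direct sums of $N$-disks}, writes both $\beta$ and $p$ in the normal forms of Proposition~\ref{prop-characterizing maps in and out of contractible complexes}, multiplies them out, and reads off an explicit homotopy $s_n = q_{n+(N-1)} u_n$. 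You instead give the purely formal argument $g - f = \beta \circ 1_C \circ \alpha \sim \beta \circ 0 \circ \alpha = 0$, relying only on the fact (noted after Definition~\ref{def-homotopic maps}) that pre- and post-composition respect $\sim$, together with the observation that $h \sim 0$ for $h = g-f$ is literally the defining condition for $f \sim g$. This is valid and arguably preferable: it is shorter, it works for any contractible $C$ without appealing to the decomposition into disks, and it makes transparent that this half of the statement is a formality in any additive category with a homotopy relation compatible with composition. What the paper's computational route buys in exchange is an explicit formula for the homotopy in terms of the factorization data, which is occasionally useful but not needed for the corollary itself.
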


\begin{proof}
It is enough to show $f$ is null homotopic if and only if $f$ factors through a contractible. So assume $f \sim 0$. Then there exists a collection of maps $\{\, s_n : X_n \xrightarrow{} Y_{n+N-1} \,\}$ such that $f_n = d^{N-1}s_n + d^{N-2}s_{n-1}d + d^{N-3}s_{n-2}d^2 + \cdots + s_{n-(N-1)}d^{N-1}$ for each $n$. By part (1) of Proposition~\ref{prop-characterizing maps in and out of contractible complexes}, the collection $\{\, s_n : X_n \xrightarrow{} Y_{n+N-1} \,\}$ determines a chain map $\beta : X \xrightarrow{} \bigoplus_{n \in \Z} D^N_n(Y_n)$ where $$\beta_n = (s_n , s_{n-1} d_X , s_{n-2} d^2_X , \cdots , s_{n-(N-1)}d^{N-1}_X).$$ Furthermore, by part (2) of Proposition~\ref{prop-characterizing maps in and out of contractible complexes}, the identity maps $\{\, 1_{Y_n} : Y_n \xrightarrow{} Y_{n} \,\}$ determine a chain map $p : \bigoplus_{n \in \Z} D^N_n(Y_n) \xrightarrow{} Y$ where $$p_n = d_Y^{N-1} + \cdots + d_Y^2 + d_Y + 1_{Y_n}.$$ This shows that $f$ factors through the contractible complex $\bigoplus_{n \in \Z} D^N_n(Y_n)$ since $$p_n \beta_n = d^{N-1}s_n + d^{N-2}s_{n-1}d + d^{N-3}s_{n-2}d^2 + \cdots + s_{n-(N-1)}d^{N-1} = f_n.$$

On the other hand, suppose $f$ factors through some contractible complex $C = \bigoplus_{n \in \Z} D^N_n(M_n)$. So $f = p \beta$ where $\beta : X \xrightarrow{} C$ and $p : C \xrightarrow{} Y$. Then by Proposition~\ref{prop-characterizing maps in and out of contractible complexes} we get $\beta_n = (u_n , u_{n-1} d_X , u_{n-2} d_X^2 , \cdots , u_{n-(N-1)}d_X^{N-1})$ for some collection $\{\, u_n : X_n \xrightarrow{} M_{n+N-1} \,\}$ and $p : C \xrightarrow{} Y$ must take the form $p_n = d_Y^{N-1}q_{n+(N-1)} + \cdots + d_Y^2q_{n+2} + d_Yq_{n+1} + q_n$ where $\{\, q_n : M_n \xrightarrow{} Y_n \,\}$ is some collection of maps. Composing we get $p_n \beta_n =$ $$d^{N-1}q_{n+(N-1)}u_n + d^{N-2}q_{n+(N-2)} u_{n-1} d + \cdots + dq_{n+1} u_{n-(N-2)}d^{N-2} + q_n u_{n-(N-1)}d^{N-1}.$$ Now setting $s_n = q_{n+(N-1)}u_n$ we get a collection of maps $\{\, s_n : X_n \xrightarrow{} Y_{n+N-1} \,\}$ satisfying $f_n = d^{N-1}s_n + d^{N-2}s_{n-1}d + d^{N-3}s_{n-2}d^2 + \cdots + s_{n-(N-1)}d^{N-1}$. By definition, we get $f \sim 0$.
\end{proof}

\begin{corollary}\label{cor-contractible complexes closed under retracts, etc.}
The class of contractible complexes is closed under direct sums, products and retracts (direct summands).
\end{corollary}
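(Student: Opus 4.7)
The plan is to handle direct sums and products by transporting the contracting homotopies componentwise, and to handle retracts by appealing to Corollary~\ref{cor-factoring through a contractible}.

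Given a family $\{C^i\}_{i \in I}$ of contractible $N$-complexes, I would choose for each $i$ a collection $\{s^i_n \mathcolon C^i_n \xrightarrow{} C^i_{n+N-1}\}$ witnessing $1_{C^i} \sim 0$. Since in $N\text{-Ch}(R)$ both direct sums and products are taken degreewise, in each degree $n$ the formulas $s_n = \bigoplus_i s^i_n$ and $s_n = \prod_i s^i_n$ make sense as $R$-linear maps out of $\bigoplus_i C^i_n$ and $\prod_i C^i_n$ respectively. Because the differential, composition, and addition on a (co)product are all computed coordinatewise, the identity $1_{C^i_n} = \sum_{k=0}^{N-1} d^{N-1-k} s^i_{n-k} d^k$ holding on each coordinate will assemble into the analogous identity on the sum or the product, showing that the identity map of $\bigoplus_i C^i$ (respectively $\prod_i C^i$) is null homotopic.

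For retracts, suppose $X$ is a direct summand of a contractible complex $C$, expressed by chain maps $X \xrightarrow{\iota} C \xrightarrow{r} X$ with $r \iota = 1_X$. Then $1_X = r \circ 1_C \circ \iota$ factors through the contractible $N$-complex $C$, and Corollary~\ref{cor-factoring through a contractible} immediately yields $1_X \sim 0$, so $X$ is contractible.

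The only possible obstacle is the routine bookkeeping required to verify the coordinatewise assembly of the homotopy identity in the (co)product case, but no genuine difficulty is expected: this is forced by the degreewise nature of (co)limits in $N\text{-Ch}(R)$, while the retract case is immediate from the factoring characterization already in hand.
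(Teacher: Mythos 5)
Your proof is correct, and the retract case is exactly the argument the paper gives: factor $1_X = r \circ 1_C \circ \iota$ through the contractible $C$ and invoke Corollary~\ref{cor-factoring through a contractible}. Where you diverge is in the treatment of direct sums and products. The paper does not assemble contracting homotopies componentwise; instead it invokes the structure theorem (Theorem~\ref{them-contractible N-complexes are direct sums of $N$-disks}), observes that $\bigoplus_{i \in I} D^N_n(M_i) = D^N_n\bigl(\bigoplus_{i \in I} M_i\bigr)$ for fixed $n$, and reshuffles summands to exhibit the sum (or product) of contractibles as again a sum of disks. Your componentwise argument is more elementary and more robust: it uses only that (co)products in $N\text{-Ch}(R)$ are degreewise, that the differential acts diagonally, and that maps out of a coproduct (into a product) are determined by their components, so the identities $1_{C^i_n} = \sum_{k=0}^{N-1} d^{N-1-k} s^i_{n-k} d^k$ assemble as claimed; in particular it would work in settings where no disk decomposition is available. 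What the paper's route buys is economy within its own development --- the structure theorem has already been proved and is needed elsewhere, so closure under sums and products falls out with no new computation --- and it keeps the proof aligned with the paper's guiding picture of contractibles as sums of disks. Both are complete proofs; yours trades a citation of a stronger theorem for a short direct verification.
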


\begin{proof}
First note that for a fixed $n$, we have $\bigoplus_{i \in I} D^N_n(M_i) = D^N_n(\bigoplus_{n \in \Z} M_i)$. Using this observation, given a direct sum $\bigoplus_{i \in I} C_i$ of contractible complexes, it will again be contractible by applying Theorem~\ref{them-contractible N-complexes are direct sums of $N$-disks} and reshuffling the summands. A similar argument with products applies to show that a product of contractible complexes is again contractible.

We now show that a retract (direct summand) of a contractible complex is again contractible. So suppose $C$ is contractible and suppose $i : S \xrightarrow{} C$ and $r : C \xrightarrow{} S$ are chain maps with $ri = 1_S$. Then by Corollary~\ref{cor-factoring through a contractible} we conclude that $1_S \sim 0$, which means $C$ is contractible.
\end{proof}

\section{Main Theorem}\label{section-main theorem}

We now use the results of the previous section along with Hovey's correspondence Theorem~\ref{them-Hovey's theorem} to show there is a model structure on the category of $N$-complexes whose homotopy category recovers $N\text{-}\class{K}(R)$. We use the language of exact model structures from~\cite{gillespie-exact model structures}.

Let $N\text{-Ch}(R)_{dw}$ be the exact category $(\cat{A},\class{E})$, where $\cat{A}$ is the category $N\text{-Ch}(R)$ and $\class{E}$ is the class of all degreewise split short exact sequences of $N$-complexes. Then one can check that $N\text{-Ch}(R)_{dw}$ is a weakly idempotent complete exact category. [Checking this is rather trivial and we refer the reader to Section~2 of~\cite{gillespie-exact model structures} for the checklist of properties. But most of this is immediate: The most nontrivial thing required here is that pushouts (and pullbacks) of $N$-complexes are taken degreewise and that any pushout (or pullback) of a split exact sequence of $R$-modules is still split exact.]

\begin{proposition}\label{prop-contractible complexes are projective and injective}
The following statements are equivalent for an $N$-complex $C$.
\begin{enumerate}
\item $C$ is contractible.
\item $C$ is a projective object in $N\text{-Ch}(R)_{dw}$.
\item $C$ is an injective object in $N\text{-Ch}(R)_{dw}$.
\end{enumerate}
\end{proposition}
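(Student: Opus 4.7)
The plan is to prove the chain of implications $(1) \Rightarrow (2)$, $(1) \Rightarrow (3)$, $(2) \Rightarrow (1)$, $(3) \Rightarrow (1)$. The implications out of $(1)$ are formal given the structure theorem for contractibles; the converses require producing explicit ``enveloping'' contractible complexes.

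For $(1) \Rightarrow (2)$ and $(1) \Rightarrow (3)$, I would first verify that each disk $D^N_n(M)$ is both projective and injective in $N\text{-Ch}(R)_{dw}$. A direct diagram chase through the identity differentials of the disk yields the natural isomorphisms
\[\Hom_{N\text{-Ch}(R)}(D^N_n(M),Y) \cong \Hom_R(M, Y_n), \qquad \Hom_{N\text{-Ch}(R)}(X, D^N_n(M)) \cong \Hom_R(X_{n-(N-1)}, M).\]
Since a degreewise split short exact sequence is split in each degree of $R$-modules, both $\Hom_R(M,-)$ and $\Hom_R(-,M)$ preserve exactness, so $D^N_n(M)$ is simultaneously projective and injective. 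Theorem~\ref{them-contractible N-complexes are direct sums of $N$-disks} realizes a contractible $C$ as both $\bigoplus_n D^N_n(M_n)$ and $\prod_n D^N_n(M_n)$; since direct sums of projectives are projective and products of injectives are injective, $C$ inherits both properties.

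For $(2) \Rightarrow (1)$, the crucial construction is a degreewise split epimorphism onto $C$ from a contractible complex. I would take $D = \bigoplus_n D^N_n(C_n)$, with $D_k = C_k \oplus C_{k+1} \oplus \cdots \oplus C_{k+N-1}$, and define
\[p_k(x_k, x_{k+1}, \ldots, x_{k+N-1}) = x_k + d_C(x_{k+1}) + d_C^2(x_{k+2}) + \cdots + d_C^{N-1}(x_{k+N-1}).\]
The inclusion $y \mapsto (y, 0, \ldots, 0)$ splits $p_k$ in each degree, and a short verification using $d_C^N = 0$ shows $p$ is a chain map. Projectivity of $C$ then lifts $1_C$ through $p$, realizing $C$ as a retract of the contractible $D$; Corollary~\ref{cor-contractible complexes closed under retracts, etc.} forces $C$ to be contractible. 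The argument for $(3) \Rightarrow (1)$ is entirely dual: let $E = \bigoplus_m D^N_{m+N-1}(C_m)$, with $E_k = C_{k-(N-1)} \oplus \cdots \oplus C_k$, and embed $C$ via the telescoping formula
\[i_k(c) = (d_C^{N-1}(c), d_C^{N-2}(c), \ldots, d_C(c), c),\]
which is a degreewise split monomorphism (splits by projecting to the last factor) from $C$ into the contractible $E$. Injectivity retracts $C$ off $E$, and the corollary again concludes.

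The one genuinely creative ingredient is finding the right enveloping contractibles with the correct ``telescoping'' formulas for $p$ and $i$; everything else reduces to routine $\Hom$ bookkeeping built on the natural isomorphisms above and the structure theorem. These enveloping constructions also function as the ``enough projectives'' and ``enough injectives'' statements for the exact structure $N\text{-Ch}(R)_{dw}$, which will be important for Hovey's correspondence later on.
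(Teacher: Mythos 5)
Your proof is correct, and it is both a little different from and a little more complete than the one in the paper. For the implication from contractibility to projectivity and injectivity, the paper verifies that each disk $D^N_n(M)$ is projective by showing directly that every degreewise split epimorphism onto it splits (via B\"uhler's criterion for projectivity in an exact category), whereas you deduce projectivity and injectivity of the disks simultaneously from the natural isomorphisms $\Hom(D^N_n(M),Y)\cong\Hom_R(M,Y_n)$ and $\Hom(X,D^N_n(M))\cong\Hom_R(X_{n-(N-1)},M)$ together with the fact that $\Hom_R(M,-)$ and $\Hom_R(-,M)$ carry split exact sequences of modules to exact sequences; both routes are routine and valid, though yours treats the injective half explicitly where the paper only remarks that it ``ought to be similar.'' More substantively, the paper's written proof stops after establishing that contractibles are projective: the converse implications $(2)\Rightarrow(1)$ and $(3)\Rightarrow(1)$ are left implicit. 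You supply them by exhibiting the enveloping contractibles, namely the degreewise split admissible epimorphism $\bigoplus_n D^N_n(C_n)\twoheadrightarrow C$ and the degreewise split admissible monomorphism $C\hookrightarrow \bigoplus_m D^N_{m+N-1}(C_m)$, and then running the retract argument through Corollary~\ref{cor-factoring through a contractible} and Corollary~\ref{cor-contractible complexes closed under retracts, etc.}. These are exactly the ``enough projectives/injectives'' constructions that the paper records separately in Proposition~\ref{prop-enough contractibles and thickness}(2); since those constructions do not depend on the present proposition, there is no circularity, and your observation that they double as the completeness of the cotorsion pairs needed for Hovey's correspondence is exactly how the paper uses them later.
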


\begin{proof}
We will show $C$ is contractible if and only if it is projective in $N\text{-Ch}(R)_{dw}$. The proof for injectives ought to be similar.

First it is easy to check that a disk $D^N_n(M)$ on any module $M$ is projective in $N\text{-Ch}(R)_{dw}$. Indeed by Proposition~11.3 of~\cite{buhler-exact categories}, all that is required is to show that any degreewise split epimorphism $Y \xrightarrow{} D^N_n(M)$ splits. Given such an epimorphism means there is an $N$-complex $X$ and a degreewise split short exact sequence $0 \xrightarrow{} X \xrightarrow{} Y \xrightarrow{} D^N_n(M) \xrightarrow{} 0$ of $N$-complexes. Since degreewise split, we have $Y_k = X_k \oplus M$ for $k = n, n-1, \cdots, n - (N-1)$ and $Y_k = X_k$ for all other $k$ and the epimorphism $Y \xrightarrow{} D^N_n(M)$ takes the form $X_k \oplus M \xrightarrow{\pi} M$ where $\pi$ is the canonical projection. One can check that the differential of $Y$ is completely determined by a collection of maps $\{\, s_1 , s_2 , \cdots , s_N  \,\}$ with the $s_i : M \xrightarrow{} X_{n-i}$ collectively satisfying a condition. Regardless of this condition, we use the maps $s_i$ to define a splitting $D^N_n(M) \xrightarrow{s} Y$ induced by defining it in degree $n$ to be $(0,1_M) : M \xrightarrow{} X_n \oplus M$. Then in degrees $n-i$ (for $i = 1,2, \cdots, N-1$) the splitting takes the form $$(d^{i-1}s_1 + d^{i-2}s_2 + \cdots + d s_{i-1} + s_i , 1_M) : M \xrightarrow{} X_{n-i} \oplus M.$$

Next suppose $C$ is contractible and write  $C = \bigoplus_{n \in \Z} D^N_n(M_n)$ using Theorem~\ref{them-contractible N-complexes are direct sums of $N$-disks}. Then since each $D^N_n(M_n)$ is projective in $N\text{-Ch}(R)_{dw}$ and since $C$ is a direct sum of projectives it follows from Corollary~11.7 of~\cite{buhler-exact categories} that $C$ is projective in the exact category $N\text{-Ch}(R)_{dw}$.
\end{proof}

Recall that by a \emph{thick} subcategory we mean a class of objects $\class{W}$ which is closed under direct summands and satisfies the property that if two out of three terms in a short exact sequence are in $\class{W}$, then so is the third.

\begin{proposition}\label{prop-enough contractibles and thickness}
Let $\class{W}$ be the class of contractible $N$-complexes.
\begin{enumerate}
\item $\class{W}$ is a thick subcategory of $N\text{-Ch}(R)_{dw}$.
\item $N\text{-Ch}(R)_{dw}$ has enough projectives and enough injectives. That is, given an $N$-complex $X$, there exists $C,D \in \class{W}$, a degreewise split epimorphism $C \xrightarrow{} X$ (enough projectives) and a degreewise split monomorphism $X \xrightarrow{} D$ (enough injectives).
\end{enumerate}
\end{proposition}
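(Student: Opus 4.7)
The plan is to reduce both parts to the structural results already established about contractibles. For part (1), closure of $\class{W}$ under direct summands is exactly Corollary \ref{cor-contractible complexes closed under retracts, etc.}. For the two-out-of-three property applied to a degreewise split short exact sequence $0 \to X \to Y \to Z \to 0$: by Proposition \ref{prop-contractible complexes are projective and injective}, any contractible $N$-complex is both projective and injective in $N\text{-Ch}(R)_{dw}$, so whenever either of $X$ or $Z$ lies in $\class{W}$ the sequence splits and $Y \cong X \oplus Z$. The three two-out-of-three cases then all follow by combining this splitting with closure under direct sums and under direct summands: if $X, Z \in \class{W}$ then $Y$ is a direct sum of contractibles; if $X, Y \in \class{W}$ or $Y, Z \in \class{W}$ then the remaining term is a direct summand of $Y$.

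For part (2), the plan is to write down explicit degreewise split resolutions using Proposition \ref{prop-characterizing maps in and out of contractible complexes}. Given $X$, for the projective side I take $C = \bigoplus_{n \in \Z} D^N_n(X_n)$ and feed the identity collection $\{q_n = 1_{X_n}\}$ into part (2) of that proposition to produce a chain map $p \colon C \to X$. In degree $k$ the source $C_k$ decomposes as a direct sum of copies of $X_k, X_{k+1}, \dots, X_{k+N-1}$, and the prescribed formula makes $p_k$ restrict to $d_X^j$ on the $X_{k+j}$ summand; hence the inclusion of the $X_k$ summand is a degreewise section of $p$. Dually, I take $D = \bigoplus_{n \in \Z} D^N_{n+N-1}(X_n)$ and feed the identity collection $\{u_n = 1_{X_n}\}$ into part (1), producing $\beta \colon X \to D$ whose degree $n$ component is the map $x \mapsto (x, d_X x, d_X^2 x, \dots, d_X^{N-1} x)$ into $X_n \oplus X_{n-1} \oplus \cdots \oplus X_{n-(N-1)}$, which is split in each degree by projection to the first coordinate. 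Both $C$ and $D$ are direct sums of $N$-disks and are therefore contractible by Theorem \ref{them-contractible N-complexes are direct sums of $N$-disks}.

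There is no serious obstacle here beyond bookkeeping: Proposition \ref{prop-characterizing maps in and out of contractible complexes} has already done the work of verifying that these prescribed degreewise formulas assemble into chain maps, and the degreewise splittings are immediate once the components are identified. The only real choice is picking the right degree shifts ($D^N_n(X_n)$ versus $D^N_{n+N-1}(X_n)$) so that the identity map on $X_n$ lands in the degree where a section/retraction is available.
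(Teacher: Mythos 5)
Your proof is correct and follows essentially the same route as the paper: part (1) is argued identically, using that a contractible term forces the degreewise split sequence to split so the remaining term is a summand of $Y$, and part (2) uses the same approximation $\bigoplus_{n \in \Z} D^N_n(X_n) \to X$ induced by the identity maps, split in each degree by the inclusion of the $X_n$ summand. The only differences are cosmetic: you spell out the dual (injective) half explicitly, which the paper omits, and you skip the explicit description of the kernel complex $K$, which the paper includes only to identify the loop functor $\Omega X$.
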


\begin{proof}
First, by Corollary~\ref{cor-contractible complexes closed under retracts, etc.} we know that $\class{W}$ is closed under taking direct summands. Next suppose that $0 \xrightarrow{} X \xrightarrow{} Y \xrightarrow{} Z \xrightarrow{} 0$ is a degreewise split short exact sequence of $N$-complexes. If $Z$ is in $\class{W}$ then the sequence splits by Proposition~\ref{prop-contractible complexes are projective and injective}, making $X$ a direct summand of $Y$. So if $Y$ is in $\class{W}$, then $X$ must also be in $\class{W}$ by Corollary~\ref{cor-contractible complexes closed under retracts, etc.}.  This proves that $Y, Z$ being in $\class{W}$ implies $X$ is in $\class{W}$. The dual argument holds and shows  $X, Y \in \class{W}$ implies $Z \in \class{W}$. Finally suppose $X$ and $Z$ are in $\class{W}$. Then by Proposition~\ref{prop-contractible complexes are projective and injective} it is clear that $Y = X \oplus Z$. So $Y \in \class{W}$ by Corollary~\ref{cor-contractible complexes closed under retracts, etc.}. This proves $\class{W}$ is thick.

We prove only the enough projectives portion of the second statement. For this let $p : \bigoplus_{n \in \Z} D^N_n(X_n) \xrightarrow{} X$ be induced from the set of identity maps $\{\, 1_{X_n} : X_n \xrightarrow{} X_{n} \,\}$. Then note that in degree $n$ we have
$$p_n : X_{n+N-1} \oplus \cdots \oplus X_{n+1} \oplus X_{n} \xrightarrow{d^{N-1} + \cdots + d + 1} X_n$$ which is clearly an epimorphism. Now define an $N$-complex $K$ by setting  $K_n = X_{n+N-1} \oplus \cdots \oplus X_{n+2} \oplus X_{n+1}$ and with differential defined by $$d(x_{N-1} , \cdots , x_{2} , x_{1}) = (x_{N-2} , \cdots , x_2 , x_{1}, -d^{N-1}x_{N-1} - \cdots - d^2x_{2} - dx_{1}).$$ One can check this differential makes $K$ an $N$-complex. Now we have a chain map $i : K \xrightarrow{} \bigoplus_{n \in \Z} D^N_n(X_n)$ defined in each degree via $$i_n = (1,1, \cdots , 1, -d^{N-1} - \cdots - d^2 - d).$$ It is easy to check that $$0 \xrightarrow{} K \xrightarrow{i} \bigoplus_{n \in \Z} D^N_n(X_n) \xrightarrow{p} X \xrightarrow{} 0$$ is a degreewise split short exact sequence.
\end{proof}

\begin{remark}
We didn't actually need to describe the complex $K$ in the proof of Proposition~\ref{prop-enough contractibles and thickness}. But we do so to point out now that it can be taken to serve as the loop on $X$. That is, $\Omega X$. The dual construction produces the suspension $\Sigma X$.
\end{remark}

\begin{theorem}\label{them-the homotopy category of $N$-complexes is a homotopy category}
Let $\class{A}$ denote the class of all $N$-complexes and let $\class{W}$ denote the class of all contractible complexes. Both $(\class{A},\class{W})$ and $(\class{W},\class{A})$ are complete cotorsion pairs in  $N\text{-Ch}(R)_{dw}$, and so form a Hovey pair. The corresponding model structure on $\text{Ch}(R)_{dw}$ is described as follows. The cofibrations (resp. trivial cofibrations) are the degreewise split monomorphisms (resp. split monomorphisms with contractible cokernel) and the fibrations (resp. trivial fibrations) are the degreewise split epimorphisms (resp. split epimorphisms with contractible kernel). The weak equivalences are the homotopy equivalences. We note the following properties of this model structure:
\begin{enumerate}
\item The model structure is Frobenius. In particular, each $N$-complex is both cofibrant and fibrant.
\item The formal homotopy relation coincides with the notion of chain homotopy in Definition~\ref{def-homotopic maps} and two maps are chain homotopic if and only if their difference factors through a contractible complex.
\item $\text{HoCh}(R)_{dw} = N\text{-}\class{K}(R)$.
\end{enumerate}
\end{theorem}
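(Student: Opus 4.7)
The plan is to verify the hypotheses of Hovey's correspondence Theorem~\ref{them-Hovey's theorem} using the preparatory results of Section~\ref{section-contractible $N$-complexes}, then unpack its conclusion to obtain the three additional properties. The key inputs are Proposition~\ref{prop-contractible complexes are projective and injective} (contractibles coincide with projectives and injectives in $N\text{-Ch}(R)_{dw}$), Proposition~\ref{prop-enough contractibles and thickness} (thickness of $\class{W}$ together with enough projectives/injectives in $N\text{-Ch}(R)_{dw}$), and Corollary~\ref{cor-factoring through a contractible} (chain homotopy is the same as factoring through a contractible complex).

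The first substantive step is to check that both $(\class{A},\class{W})$ and $(\class{W},\class{A})$ are complete cotorsion pairs. The four orthogonality computations are immediate from Proposition~\ref{prop-contractible complexes are projective and injective}: injectivity of every $W \in \class{W}$ gives $\leftperp{\class{W}} = \class{A}$ and $\rightperp{\class{A}} = \class{W}$, while projectivity dually gives $\rightperp{\class{W}} = \class{A}$ and $\leftperp{\class{A}} = \class{W}$. Completeness of $(\class{A},\class{W})$ reduces to the existence of special $\class{W}$-preenvelopes and completeness of $(\class{W},\class{A})$ to the existence of special $\class{W}$-precovers; both are supplied directly by Proposition~\ref{prop-enough contractibles and thickness}(2), while the other halves of completeness are vacuous because $\class{A}$ is the ambient category.

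With thickness of $\class{W}$ from Proposition~\ref{prop-enough contractibles and thickness}(1) in hand, I would apply Hovey's theorem with $\class{Q} = \class{R} = \class{A}$ and trivial class $\class{W}$. The descriptions of (trivial) cofibrations and fibrations read off immediately from the statement of Theorem~\ref{them-Hovey's theorem}. Property~(1) is automatic: every $N$-complex $X$ is cofibrant because $0 \to X$ is an admissible monomorphism with cokernel in $\class{A}$, and dually fibrant; because the trivial objects are precisely the projective-injective objects of the exact category, the resulting model structure is Frobenius.

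Properties~(2) and~(3) require identifying the formal model-theoretic homotopy relation with the concrete chain homotopy of Definition~\ref{def-homotopic maps}. The cleanest route is to invoke the standard fact, available in any Frobenius exact model structure (see~\cite{gillespie-exact model structures}), that two parallel maps are formally homotopic if and only if their difference factors through a trivial object; Corollary~\ref{cor-factoring through a contractible} then rewrites this as chain homotopy, proving~(2). For~(3), since every object is cofibrant-fibrant the homotopy category is obtained by quotienting morphism sets of $N\text{-Ch}(R)_{dw}$ by formal homotopy, which by~(2) is chain homotopy, giving exactly $N\text{-}\class{K}(R)$; the weak equivalences are then the maps which become isomorphisms in the homotopy category, namely the chain homotopy equivalences. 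The main technical obstacle is this identification of formal homotopy with chain homotopy; everything else is a matter of assembling the results of Section~\ref{section-contractible $N$-complexes} and feeding them into Hovey's theorem.
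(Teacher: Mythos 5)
Your proposal is correct and follows essentially the same route as the paper: both deduce the two cotorsion pairs from Proposition~\ref{prop-contractible complexes are projective and injective}, get completeness and thickness from Proposition~\ref{prop-enough contractibles and thickness}, apply Hovey's correspondence, and then identify formal homotopy with chain homotopy via the Frobenius factor-through-a-projective-injective criterion from~\cite{gillespie-exact model structures} combined with Corollary~\ref{cor-factoring through a contractible}. The only difference is cosmetic: you spell out the four orthogonality computations that the paper leaves implicit.
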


\begin{proof}
It follows immediately from Proposition~\ref{prop-contractible complexes are projective and injective}~(2) that $(\class{W},\class{A})$ is a cotorsion pair in $N\text{-Ch}(R)_{dw}$ and Proposition~\ref{prop-contractible complexes are projective and injective}~(3) says that $(\class{A},\class{W})$ is a cotorsion pair. Proposition~\ref{prop-enough contractibles and thickness}~(2) says that these cotorsion pairs are complete. Also by Proposition~\ref{prop-enough contractibles and thickness}~(1), $\class{W}$ is thick and so $(\class{A},\class{W})$ and $(\class{W},\class{A})$ form a Hovey pair where in Theorem~\ref{them-Hovey's theorem} we have $\class{A} = \class{Q} = \class{R}$ and $\class{W}$ are the trivial objects. The existence of the  model structure follows and as in~\cite{gillespie-exact model structures} we call it Frobenius since it exists on an exact category and each object is both cofibrant and fibrant. 

It was shown in Corollary~4.8~(3) of~\cite{gillespie-exact model structures} that for any Frobenius model structure, two maps are homotopic if and only if their difference factors through a projective-injective object. So the second statement now follows from Corollary~\ref{cor-factoring through a contractible} and Proposition~\ref{prop-contractible complexes are projective and injective}. The third statement is clear from the most fundamental theorem about model categories: See Theorem~1.2.10 of~\cite{hovey-model-categories}. 

\end{proof}


\end{document}